\newtheorem{theorem}{Theorem}[section]
\newtheorem{lemma}[theorem]{Lemma}
\newtheorem{definition}[theorem]{Definition}
\newtheorem{corollary}[theorem]{Corollary}
\newtheorem{remark}[theorem]{Remark}
\newcommand{\beqa}{\begin{eqnarray*}}
\newcommand{\eeqa}{\end{eqnarray*}}
\newcommand{\beqn}{\begin{eqnarray}}
\newcommand{\eeqn}{\end{eqnarray}}
\newcommand{\e}{\varepsilon}
\newcommand{\del}{\delta}
\newcommand{\ol}{\overline}
\newcommand{\C}{\mathcal{C}}
\newcommand{\K}{\mathcal{K}}
\newcommand{\W}{\mathcal{W}}
\newcommand{\ds}{\displaystyle}
\newcounter{cnt1}
\newcounter{cnt2}
\newcounter{cnt3}
\newcounter{cnt4}
\newcommand{\blr}{\begin{list}{$($\roman{cnt1}$)$}
{\usecounter{cnt1} \setlength{\topsep}{0pt}
\setlength{\itemsep}{0pt}}}
\newcommand{\bla}{\begin{list}{$($\alph{cnt2}$)$}
{\usecounter{cnt2} \setlength{\topsep}{0pt}
\setlength{\itemsep}{0pt}}}
\newcommand{\bln}{\begin{list}{$($\arabic{cnt3}$)$}
{\usecounter{cnt3} \setlength{\topsep}{0pt}
\setlength{\itemsep}{0pt}}}
\newcommand{\blR}{\begin{list}{$($\Roman{cnt4}$)$}
{\usecounter{cnt4} \setlength{\topsep}{0pt}
\setlength{\itemsep}{0pt}}}
\newcommand{\el}{\end{list}}
\begin{document}

\title[The Generalised MIP \& UMIP]{The Generalised (Uniform) Mazur Intersection Property}

\author[Bandyopadhyay]{Pradipta Bandyopadhyay}
\address[Pradipta Bandyopadhyay]{Stat--Math Division,
Indian Statistical Institute, 203, B.~T. Road, Kolkata
700108, India.}
\email{pradipta@isical.ac.in}

\author[Gothwal]{Deepak Gothwal}
\address[Deepak Gothwal]{Stat--Math Division, Indian Statistical Institute, 203, B.~T. Road, Kolkata
700108, India.}
\email{deepakgothwal190496@gmail.com}

\subjclass[2010]{Primary 46B20 \hfill To appear in JMAA}

%   \date{\today}

\keywords{Compatible class, (strong) $\C$-MIP, (strong) $\C$-UMIP}

\begin{abstract}
Given a family $\C$ of closed bounded convex sets in a Banach space $X$, we say that $X$ has the $\C$-MIP if every $C \in \C$ is the intersection of the closed balls containing it. In this paper, we introduce a stronger version of the $\C$-MIP and show that it is a more satisfactory generalisation of the MIP inasmuch as one can obtain complete analogues of various characterisations of the MIP.

We also introduce uniform versions of the (strong) $\C$-MIP and characterise them analogously. Even in this case, the strong $\C$-UMIP appears to have richer characterisations than the $\C$-UMIP.
\end{abstract}

\maketitle

\section{Introduction}
S.\ Mazur \cite{Ma} initiated the study of the following intersection property of balls in normed spaces, now called the Mazur Intersection Property (MIP):
\begin{quote}
{\it Every closed bounded convex set is the intersection of closed balls containing it.}
\end{quote}

J.~R.~Giles, D.\ A.\ Gregory and B.\ Sims~\cite{GGS} obtained various characterisations of the MIP, most well-known criterion stating that the w*-denting points of $B(X^*)$ are norm dense in $S(X^*)$. Chen and Lin \cite{CL} introduced the notion of w*-semidenting points to characterise the MIP. The paper \cite{GJM} is a good survey of the MIP and related works.

Whitfield and Zizler in \cite{WZ1} introduced a uniform version of the MIP (UMIP) and obtained characterisations similar to \cite{GGS} but the characterisation related to the w*-denting points was missing. Recently, such a characterisation in terms of ``uniform w*-semidenting points" has been obtained by the authors in \cite{BGG}.

Soon after \cite{GGS}, there appeared several papers dealing with generalisations of the MIP, basically studying when members of a given family of closed bounded convex sets are intersection of balls. For example, \bla
\item $\mathcal{K}$ = \{all compact convex sets in $X\}$ \cite{WZ, Se}
\item $\mathcal{W}$ = \{all weakly compact convex sets in $X\}$ \cite{Z}
\item $\mathcal{F}$ = \{all compact convex sets in $X$ with finite affine dimension$\}$ \cite{S2}
\el

Let $\C$ be a family of bounded subsets of $X$. Let $\tau_{\C}$ be the topology on $X^*$ of uniform convergence on the sets of $\C$. Let us say that $X$ has the $\C$-MIP if every closed convex set $C \in \C$ is the intersection of the closed balls containing it.

In \cite{Se, S2, WZ}, the authors characterised the $\C$-MIP in cases (a) and (c) above in terms of the density in $X^*$ of the cone of the extreme points of $B_{X^*}$ in the topology $\tau_{\C}$.

The first author studied the $\C$-MIP for a general family $\C$ of bounded sets in \cite{Ba2} and obtained some necessary and some sufficient conditions that recaptured the results of \cite{Se, S2, WZ}. Chen and Lin \cite{CL1} also discussed the $\C$-MIP. Both the papers observed that if the cone of the ``$\C$-denting points'' of $B(X^*)$, as defined in \cite{CL1}, is $\tau_\C$-dense in $X^*$, then $X$ has the $\C$-MIP. Both papers proved the converse under additional hypothesis.

In \cite{GGS}, there are several alternative characterisations of the MIP \cite[Theorem 2.1 $(ii), (iii)$ and $(iv)$]{GGS}. Analogous characterisations of the UMIP were also obtained in \cite{WZ, BGG}. \cite{Ba2} is the only attempt so far to obtain such analogues for the $\C$-MIP (See Theorem~\ref{T1} below).

Recently, Cheng and Dong \cite{CD} characterised the $\C$-MIP in terms of the ``$\C$-semidenting'' points. However, their definition does not appear to be a natural generalisation of either the w*-semidenting points of \cite{CL} or the $\C$-denting points of \cite{CL1}.

Vanderwerff \cite{Va} introduced the following stronger notion for the families $\mathcal{K}$ and $\mathcal{W}$ above, which we will call \emph{strong $\C$-MIP}:
\begin{quote}
\emph{For every convex $C \in \C$ and $\beta \geq 0$, $\ol{C + \beta B(X)}$ is the intersection of closed balls containing it.}
\end{quote}
For $\C = \K$, he showed that it is equivalent to the extreme points of $B(X^*)$ being w*-dense in $S(X^*)$.

In this paper, we show that the strong $\C$-MIP is actually a more satisfactory generalisation of the MIP for general families. It can be characterised in terms of a more natural notion of the (strong) $\C$-semidenting points. Moreover, one can also obtain complete analogue of \cite[Theorem 2.1]{GGS}.

We also introduce the uniform version of the $\C$-MIP and its stronger form and characterise them analogously. Even in this case, the strong $\C$-UMIP appears have richer characterisations than the $\C$-UMIP.

In the last section, we summarise the interrelations between various notions discussed in the paper with examples and counterexamples.

\section{Preliminaries}

Throughout this article, $X$ is a real Banach space.

For $x\in X$ and $r>0$, we denote by $B(x, r)$ the \emph{open ball} $\{y\in X : \|x-y\|<r\}$ and by $B[x, r]$ the \emph{closed ball} $\{y\in X: \|x-y\|\leq r\}$. We denote by $B(X)$ the \emph{closed unit ball} $\{x\in X : \|x\| \leq 1\}$ and by $S(X)$ the \emph{unit sphere} $\{x \in X : \|x\| = 1\}$.

For a bounded subset $A \subseteq X$, denote by co($A$) (resp.\ aco($A$)) the convex (resp.\ absolutely convex) hull of $A$. Let $\|f\|_A := \sup\{|f(x)| : x \in A\}$ for $f \in X^*$ and $diam_A(B) = \sup\{ \|f - g\|_A : f, g \in B\}$ for any non-empty subset $B$ of $X^*$.
For $f \in X^*$, $\e >0$ and $A \in \C$,
$B_{A}(f, \e) := \{g \in X^* : \|f - g\|_A <\e\}$.

For a bounded set $C \subseteq X$ and $x \in X$, let $d(x, C) = \inf\{\|x-z\| : z \in C\}$ denote the distance function.

For $A \subseteq X$, the cone generated by $A$ is cone$(A) = \{\lambda a : \lambda \geq 0, a \in A\}$.

\begin{definition} \rm
We say that a class $\C$ of bounded subsets of $X$ is a compatible class if \blR
\item $C \in \C$, $\alpha \in \mathbb{R}$ and $x \in X \implies \alpha C + x \in \C$, $C \cup \{x\} \in \C$;

\item $C \in \C$ implies the closed absolutely convex hull of $C \in \C$.

\item $C \in \C$ and $A \subseteq C$ implies $A \in \C$.

\item $C_1, C_2 \in \C$ implies $C_1 \cup C_2 \in \C$.
\el
\end{definition}

Our definition is slightly different from the ones in \cite{Ba2, CL1, CD}.

\begin{definition} \rm
For a compatible class $\C$, $X$ is said to have the $\C$-MIP if every \emph{closed convex} $C \in \C$ is the intersection of closed balls containing it.
\end{definition}

Notice that the families $\K$, $\W$ and $\mathcal{F}$ above are not exactly compatible classes with this definition. One has to consider such sets and subsets thereof. However, the property $\C$-MIP remains the same. And we should add the family $\mathcal{B}$ of all bounded sets, corresponding to the MIP itself, in our list of examples of a compatible class.

Throughout the article, $\C$ will denote a compatible class of sets. We will denote by $\tau_{\C}$, or, simply $\tau$ when there is no ambiguity, the topology on $X^*$ of uniform convergence on the sets of $\C$, or, in other words, the topology on $X^*$ generated by the family of seminorms $\{\|\cdot\|_C : C \in \C\}$. Since $\C$ is a compatible class, the family $\{B_{C}(0, \e) : C \in \C, \e > 0\}$ forms a local base for $\tau_{\C}$ at 0. For the classes $\C = \mathcal{F}$, $\K$, $\W$ and $\mathcal{B}$, the topology $\tau_{\C}$ coincides with the w*-, bw*-, Mackey and norm topologies, respectively. We note that the topologies $\tau_{\mathcal{F}}$ (w*-) and $\tau_{\K}$ (bw*-) coincide on bounded sets.

\begin{definition} \rm \bla
\item For $x\in S(X)$, we denote by $D(x)$ the set $\{f\in S(X^*) : f(x)=1\}$. Any selection of $D$ is called a support mapping.

\item A \emph{w*-slice} of $B(X^*)$ determined by $x \in S(X)$ is a set of the form
\[
S(B(X^*), x, \delta) := \{f \in B(X^*) : f(x) > 1-\delta \}
\]
for some $0<\delta < 1$.

\item For $\e, \delta>0$, $ x\in S(X) $ and $C \in \C$ denote by
\beqa
d_1(C, x, \delta) & = & \sup \limits_{0< \lambda < \delta, \ y \in C} \frac{\|{x+\lambda y}\| +\|{x-\lambda y}\|- 2}{\lambda} \\
d_2(C, x, \delta) & = & diam_{C}(S(B(X^*), x, \delta))\\
d_3(C, x, \delta) & = & diam_{C}(D(S(X)\cap B (x, \delta)))
\eeqa
\item \cite{Ba2} For $\e, \delta > 0$ and $C \in \C$
\[
M_{\e, \delta, C}(X) = \{x\in S(X) : \sup_{0< \|y\| < \delta, y \in C} \frac{\|{x + y}\| + \|{x - y}\|- 2}{\|y\|} < \e\}.
\]
In other words, $M_{\e, \delta, C}(X) = \{x\in S(X) : d_1(C, x, \delta )< \e\}$. Define
\[
M_{\e, C}(X) = \bigcup_{\delta > 0} M_{\e, \delta, C}(X).
\]

\item Define $H = \bigcap \{\ol{D(M_{\e, C}(X))}^{\tau} : C \in \C, \e >0\}$.
\el
\end{definition}

Following lemma is an easy adaptation of \cite[Lemma 2.10]{BGG}. Details can be found in \cite[Lemma 2.1]{Ba1}:

\begin{lemma} \label{lem1}
For any $\alpha, \delta > 0$, we have,
\begin{enumerate}[(i)]
\item $\ds d_2(C, x, \alpha )\leq d_1(C, x, \delta) +\frac{2\alpha}{\delta}$.
\item $d_3(C, x, \delta)\leq d_2(C, x, \delta )$.
\item $d_1(C, x, \delta)\leq d_3(C, x, 2{\delta})$.
\end{enumerate}
\end{lemma}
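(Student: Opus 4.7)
The plan is to prove each of (i), (ii), (iii) by direct Hahn--Banach manipulation, following the template from the classical analysis of the MIP (cf.\ \cite{GGS, BGG}). The three quantities $d_1, d_2, d_3$ are different measurements of one underlying geometric phenomenon, and the proofs pass between them by selecting appropriate support or slice functionals and rearranging.

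For (i): take arbitrary $f, g \in S(B(X^*), x, \alpha)$, so $f(x), g(x) > 1 - \alpha$. For every $y \in C$ and $\lambda \in (0, \delta)$ I apply $\|x + \lambda y\| \geq f(x + \lambda y)$ and $\|x - \lambda y\| \geq g(x - \lambda y)$, add, and rearrange to get
\[
f(y) - g(y) \;\leq\; \frac{\|x+\lambda y\|+\|x-\lambda y\|-2}{\lambda} + \frac{2\alpha}{\lambda}.
\]
The first term on the right is at most $d_1(C, x, \delta)$, and letting $\lambda \to \delta^{-}$ drives the second term to $2\alpha/\delta$; symmetry (swap $f, g$) and suprema over $y$ and over $f, g$ yield (i). For (ii), the observation is that any $f \in D(p)$ with $p \in S(X) \cap B(x, \delta)$ satisfies $f(x) = f(p) - f(p - x) \geq 1 - \|p - x\| > 1 - \delta$, so $D(S(X) \cap B(x, \delta)) \subseteq S(B(X^*), x, \delta)$; monotonicity of the $\|\cdot\|_C$-diameter in the ground set gives (ii).

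For (iii), fix $\lambda \in (0, \delta)$ and $y \in C$, set $u = (x + \lambda y)/\|x + \lambda y\|$, $v = (x - \lambda y)/\|x - \lambda y\|$, and pick $f \in D(u)$, $g \in D(v)$. The identity
\[
f(y) - g(y) \;=\; \frac{\|x + \lambda y\| + \|x - \lambda y\| - f(x) - g(x)}{\lambda} \;\geq\; \frac{\|x+\lambda y\|+\|x-\lambda y\|-2}{\lambda}
\]
(using $f(x), g(x) \leq 1$) reduces the matter to placing $u$ and $v$ inside $B(x, 2\delta)$. The main (and essentially only) technical step is the estimate $\|u - x\| \leq 2\lambda \|y\|/\|x + \lambda y\|$, derived from the identity $\|x + \lambda y\|(u - x) = (1 - \|x + \lambda y\|)x + \lambda y$ and the reverse triangle inequality $|1 - \|x + \lambda y\|| \leq \lambda \|y\|$; the factor of $2$ in the radius $2\delta$ of (iii) precisely absorbs this constant. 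Once this bound is in hand one has $\|f - g\|_C \leq d_3(C, x, 2\delta)$, and the argument is complete.
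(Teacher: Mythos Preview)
The paper does not actually supply a proof of this lemma: it merely says the result is ``an easy adaptation of \cite[Lemma~2.10]{BGG}'' and refers to \cite[Lemma~2.1]{Ba1} for details. Your approach is exactly the standard one that those references would contain, and parts (i) and (ii) are carried out correctly.

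There is, however, a genuine gap in your sketch of (iii). From the estimate
\[
\|u - x\| \;\le\; \frac{2\lambda\|y\|}{\|x+\lambda y\|}
\]
you conclude that $u \in B(x,2\delta)$, saying that ``the factor of $2$ in the radius $2\delta$ precisely absorbs this constant.'' But this requires $\lambda\|y\|/\|x+\lambda y\| < \delta$, which does not follow from $\lambda < \delta$ alone: nothing in the hypotheses bounds $\|y\|$ by $1$, and $\|x+\lambda y\|$ can be as small as $1-\lambda\|y\|$. Concretely, if $C$ contains a vector $y$ with $\|y\|$ large and $\lambda$ is taken close to $\delta$, the right-hand side can exceed $2\delta$; indeed, in one dimension with $x=1$, $C=\{10\}$, $\delta=1/2$, one computes $d_1(C,x,\delta)=16$ while $d_3(C,x,2\delta)=0$, so (iii) as literally stated is false without a normalisation such as $C\subseteq B(X)$ (which is how the lemma is actually applied later, e.g.\ in Theorem~\ref{thm2}). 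Your method is correct; what is missing is either the hypothesis $C\subseteq B(X)$, or a remark that by homogeneity one may reduce to that case before carrying out the estimate.
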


\begin{definition} \rm \bla
\item We say that $f \in S(X^*)$ is a w*-denting point of $B(X^*)$ if for every $\e > 0$, there exists a w*-slice $S$ of $B(X^*)$ such that $f \in S$ and $diam(S) < \e$.
\item \cite{CL} We say that $f \in S(X^*)$ is a w*-semidenting point of $B(X^*)$ if for every $\e > 0$, there exists a w*-slice $S$ of $B(X^*)$ such that $S \subseteq B(f, \e)$.
\item \cite{CL1} We say that $f \in S(X^*)$ is a $\C$-denting point of $B(X^*)$ if for each $A \in \C$ and $\e > 0$, there exists a w*-slice $S$ of $B(X^*)$ such that $f \in S$ and $diam_{A}(S) < \e$.
\item \cite{CD} We say that $f \in S(X^*)$ is a $\C$-semidenting point of $B(X^*)$ if for each $A \in \C$ and $\e > 0$, there exists a w*-slice $S$ of $B(X^*)$ such that $S \subseteq cone(B_{A}(f, \e))$.
\el
\end{definition}

Known results on $\C$-MIP \cite{Ba2, CL1, CD} are summarised in our notation and terminology below.

\begin{theorem} \cite[Theorem 1, Corollary 1]{Ba2}, \cite[Theorem 1.10]{CL1}, \cite[Theorem 2.3]{CD} \label{T1} Suppose $\C$ is a compatible class of bounded sets. Consider the following statements~:
\bla
\item The cone generated by $\C$-denting points of $B(X^*)$ is $\tau$-dense in~$X^*$.
\item The cone generated by $H$ is $\tau$-dense in~$X^*$.
\item If $C_1, C_2 \in \C$ are closed convex such that there exists $f \in X^*$ with $\sup f(C_1) < \inf f(C_2)$, then there exist disjoint closed balls $B_1, B_2$ such that $C_i \subseteq B_i$, $i = 1, 2$.
\item $X$ has the $\C$-MIP.
\item Every $f \in S(X^*)$ is a $\C$-semidenting point of $B(X^*)$.
\item For every norm dense subset $A \subseteq S(X)$ and every support mapping $\phi$, the cone generated by $\phi (A)$ is $\tau$-dense in $X^*$.
\el
Then $(a)\implies (b) \implies (c) \implies (d) \iff (e) \implies (f)$. Moreover, if
\[
A = \{x\in S(X) : D(x) \mbox{ contains a $\C$-denting point of } B(X^*)\}
\]
is norm dense in $S(X)$, then all the statements are equivalent.
\end{theorem}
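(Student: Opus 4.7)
The plan is to walk through the chain $(a) \Rightarrow (b) \Rightarrow (c) \Rightarrow (d) \iff (e) \Rightarrow (f)$ in order, with Lemma \ref{lem1} as the hinge relating the w*-slice diameter $d_2$, the support-mapping diameter $d_3$, and the smoothness modulus $d_1$. The compatible-class axioms --- closure under finite unions (needed to form $C_1 \cup C_2$ in (c)), under positive dilation and translation (for ball construction), and under passage to subsets (so singletons lie in $\C$) --- will be invoked tacitly throughout.

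For the first three implications: given a $\C$-denting $f$, $C \in \C$, and $\e>0$, pick a w*-slice $S(B(X^*), x_0, \delta) \ni f$ of small $\|\cdot\|_C$-diameter; Lemma \ref{lem1}(ii) and (iii) place $x_0 \in M_{\e, C}(X)$, and any $g \in D(x_0)$ also lies in the slice, so $\|f-g\|_C$ is small, yielding (b). For $(b) \Rightarrow (c)$, separate $C_1, C_2$ by $f$ with gap $\eta > 0$, set $C = C_1 \cup C_2 \in \C$, and use $\tau$-density of $\mathrm{cone}(H)$ to make $\|f - \lambda \phi(x_0)\|_C$ arbitrarily small for some $x_0 \in M_{\e, C}(X)$ and $\lambda \ge 0$; the smoothness bound $d_1(C, x_0, \delta) < \e$ then lets one replace the supporting half-space of $\phi(x_0)$ on $C$ by closed balls $B[-Rx_0, R + \alpha_i]$ for large $R$, which, with the $\alpha_i$ chosen on either side of the separation line, enclose $C_1$ and $C_2$ disjointly. $(c) \Rightarrow (d)$ is immediate: given a closed convex $C \in \C$ and $x \notin C$, apply (c) to the disjoint convex sets $\{x\}$ and $C$.

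For $(d) \iff (e)$ I will use the Cheng--Dong ball-slice duality. $(e) \Rightarrow (d)$: separate $x \notin C$ from $C$ by $f$ and apply the $\C$-semidenting property to $f$ with $A = C \cup \{x\} \in \C$; the w*-slice sits inside $\mathrm{cone}(B_A(f, \e))$, so its elements agree with $f$ on $A$ up to $\e$, and the polar of such a slice translates to a ball enclosing $C$ and missing $x$. $(d) \Rightarrow (e)$: for $f \in S(X^*)$, $A \in \C$, $\e > 0$, invoke $\C$-MIP on a closed convex set in $\C$ built from $A$ and the half-space $\{f \ge \|f\|_A - \eta\}$, and read off from an enclosing ball a slice contained in $\mathrm{cone}(B_A(f, \e))$. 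For $(e) \Rightarrow (f)$: given $h \in S(X^*)$, $C \in \C$, $\e > 0$, take the slice $S(B(X^*), y, \delta) \subseteq \mathrm{cone}(B_C(h, \e))$ from $\C$-semidenting; choose $y_0 \in A$ with $\|y - y_0\| < \delta/2$ so that $\phi(y_0)(y) > 1-\delta$, forcing $\phi(y_0) \in S \subseteq \mathrm{cone}(B_C(h, \e))$, and as $C, h, \e$ vary this is exactly the $\tau$-density of $\mathrm{cone}(\phi(A))$. For the moreover part, the hypothesis allows choosing $\phi$ so that $\phi(x)$ is a $\C$-denting point for every $x \in A$; then $(f)$ upgrades verbatim to $(a)$.

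I expect the main technical obstacle to be the quantitative bookkeeping in $(b) \Rightarrow (c)$: one must simultaneously balance the separation gap $\eta$, the approximation error $\|f - \lambda \phi(x_0)\|_C$, the smoothness parameter $\e$, and the radius $R$, so that the large ball centred along $-x_0$ genuinely encloses $C_1$ while remaining disjoint from $C_2$. A parallel but milder tuning arises in $(d) \Rightarrow (e)$, where the auxiliary $\C$-set must be designed so that its ball-intersection representation yields a slice of the right tightness. These parameter-balancings, routine in spirit for MIP-type arguments, are where care is required; the remaining implications are duality and approximation arguments essentially dictated by the notation.
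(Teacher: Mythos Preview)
The paper does not supply a proof of this theorem; it is stated as a compendium of results already in the literature (\cite{Ba2}, \cite{CL1}, \cite{CD}), and the paper moves on immediately to its own contributions. Your outline is therefore not being compared against anything in the paper itself, but it does track the arguments in those cited sources: Lemma~\ref{lem1} to show that $\C$-denting points lie in $H$ for $(a) \Rightarrow (b)$, approximation of separating hyperplanes by large balls via the modulus $d_1$ for $(b) \Rightarrow (c)$, the Cheng--Dong equivalence \cite{CD} for $(d) \Leftrightarrow (e)$, and the selection-of-support-mapping trick to close the loop under the extra hypothesis.

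One concrete slip in your $(b) \Rightarrow (c)$ sketch: writing both enclosing balls as $B[-Rx_0, R+\alpha_i]$ with a common centre $-Rx_0$ makes them concentric, hence nested rather than disjoint. The standard construction places the centres on opposite rays along $x_0$: from $d_1(C, x_0, 1/R) < \e$ one obtains, for $c \in C$, the estimates $\|Rx_0 + c\| < R + \phi(x_0)(c) + \e$ and $\|Rx_0 - c\| < R - \phi(x_0)(c) + \e$, so that $C_1 \subseteq B[-Rx_0, R+\alpha_1+\e]$ and $C_2 \subseteq B[Rx_0, R-\alpha_2+\e]$, and the separation gap $\alpha_2 - \alpha_1 > 2\e$ then gives disjointness. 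With this correction your plan is sound and in line with \cite{Ba2}.
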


\begin{remark} \rm
In \cite[Theorem 1.10]{CL1}, it is shown that $(a)$ and $(d)$ are equivalent under a weaker assumption that every w*-slice of $B(X^*)$ contains a $\C$-denting point.
\end{remark}

\section{Strong $\C$-MIP}

\begin{definition} \rm
A Banach space $X$ is said to have the strong $\C$-MIP if for every closed convex $C \in \C$ and $\beta \geq 0$, $\ol{C + \beta B(X)}$ is the intersection of closed balls containing it.
\end{definition}

Vanderwerff \cite{Va} called this $\K$-IP and $\W$-IP for $\C = \K$ and $\W$, respectively. Clearly, for $\C = \mathcal{B}$, $\C$-MIP and strong $\C$-MIP coincide. See Section~\ref{sec5} for some examples and counterexamples.

\begin{definition} \rm
The duality map on $X$ is said to be norm-$\tau$ quasicontinuous if for every $f \in S(X^*)$, $\e >0$ and $C \in \C$, there exist $\delta > 0$ and $x \in S(X)$ such that $D(S_X \cap B(x, \delta)) \subseteq B_C(f, \e)$.
\end{definition}

In our opinion, the following is a more natural generalisation of both the w*-semidenting points of \cite{CL} and the $\C$-denting points of \cite{CL1} than the $\C$-semidenting points defined in \cite{CD}.

\begin{definition} \rm
We say that $f \in S(X^*)$ is a strong $\C$-semidenting point of $B(X^*)$ if for each $A \in \C$ and $\e > 0$, there exists a w*-slice $S$ of $B(X^*)$ such that $S \subseteq B_{A}(f, \e)$.
\end{definition}

We begin by observing that the strong $\C$-semidenting points can be characterised completely analogous to \cite[Lemma 2.2]{GGS}.

\begin{lemma} \label{lem3}
For $f \in S(X^*)$, the following are equivalent:
\bla
\item $f \in H$, that is, $f \in \bigcap_{\e >0} \ol{D(M_{\e, C}(X))}^{\tau}$ for every $C \in \C$.
\item $f$ is a strong $\C$-semidenting point.
\item For every $\beta \geq 0$ and closed convex $C \in \C$ such that $\inf f(\ol{C +\beta B(X)}) >0$, there exists a closed ball $B[x_0, r_0]$ in $X$ containing $C +\beta B(X)$ with $0 \notin B[x_0, r_0]$.
\item For every $\e >0$ and $C \in \C$, there exist $\del >0$ and $x \in S(X)$ such that $D(S_X \cap B(x, \delta)) \subseteq B_C(f, \e)$.
\el
\end{lemma}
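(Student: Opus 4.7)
I would establish the four equivalences via the cycle $(a)\Leftrightarrow(d)\Leftrightarrow(b)\Leftrightarrow(c)$. Throughout, compatibility axiom (IV) lets me absorb any basic $\tau$-neighborhood of $f$ into one of the form $\{h:\|h-f\|_{C'}<\e'\}$ for a single $C'\in\C$, and axiom (II) lets me replace $C$ (or $A$) by its closed absolutely convex hull whenever useful.

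For $(a)\Leftrightarrow(d)$ and $(b)\Leftrightarrow(d)$ I rely entirely on Lemma~\ref{lem1}. In $(a)\Rightarrow(d)$, given $\e,C$, I apply (a) with the neighborhood $\{h:\|h-f\|_C<\e/4\}$ to obtain $x\in M_{\e/4,\delta_0,C}(X)$ and $g\in D(x)$ with $\|g-f\|_C<\e/4$; then Lemma~\ref{lem1}(ii) and (i) give $d_3(C,x,\alpha)\leq \e/4+2\alpha/\delta_0$, and for $\alpha=\delta_0\e/8$ this places $D(S_X\cap B(x,\alpha))$ inside $B_C(g,\e/2)\subseteq B_C(f,\e)$. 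For the reverse, Lemma~\ref{lem1}(iii) converts the containment in (d) into $d_1(C,x,\delta/2)\leq\e$, so $x\in M_{\e,C}(X)$ and any $g\in D(x)$ sits in the right intersection. For $(b)\Leftrightarrow(d)$, the easy inclusion $D(S_X\cap B(x,\delta))\subseteq S(B(X^*),x,\delta)$, from $g\in D(y),\ \|y-x\|<\delta\Rightarrow g(x)=1-g(y-x)\geq 1-\delta$, gives $(b)\Rightarrow(d)$; conversely, Lemma~\ref{lem1}(i) bounds the $\|\cdot\|_C$-diameter of $S(B(X^*),x,\alpha)$ for small $\alpha$, and since any $g\in D(x)\subseteq B_C(f,\e)$ lies in this slice, the entire slice lies close to $f$ in $\|\cdot\|_C$.

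For $(b)\Rightarrow(c)$: set $\mu:=\inf f(\ol{C+\beta B(X)})>0$; choosing $b\in B(X)$ with $f(b)$ near $-1$ shows $\inf f(C)\geq\mu+\beta$. Apply (b)/(d) with $\e=\mu/4$ to obtain $x\in S(X)$, $\delta>0$, and $g\in D(x)$ with $d_1(C,x,\delta/2)<\mu/2$ and $\|g-f\|_C<\mu/4$. For $t>2/\delta$ and $c\in C$, set $\lambda=1/t$; combining the bound $\|x+\lambda c\|+\|x-\lambda c\|-2\leq\lambda\cdot d_1(C,x,\delta/2)$ with the support inequality $\|x+\lambda c\|\geq 1+\lambda g(c)$ gives $\|x-\lambda c\|\leq 1+\lambda(d_1-g(c))\leq 1-\lambda(\mu/4+\beta)$, and multiplying by $t$ gives $\|tx-c\|\leq t-\mu/4-\beta$. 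Therefore $B[tx,t-\mu/4-\beta]\supseteq C$, so $B[tx,t-\mu/4]\supseteq C+\beta B(X)$ and $\|tx\|=t>t-\mu/4$.

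The main obstacle is $(c)\Rightarrow(b)$. Given $A\in\C$ (WLOG closed and absolutely convex) and $\e>0$, my plan is to choose $y_0\in S(X)$ with $f(y_0)>1-\eta$ and apply (c) to $C^*:=My_0+\alpha A\in\C$ (compatibility axioms (I) and (II)) with a cushion $\beta^*$ just below the threshold $Mf(y_0)-\alpha\|f\|_A$ that guarantees $\inf f(\ol{C^*+\beta^*B(X)})>0$. Unfolding the resulting ball $B[x_0,r_0]\supseteq C^*+\beta^*B(X)$ by supremizing over $b\in B(X)$ and using $A=-A$ yields, for every $g\in B(X^*)$,
\[
\alpha\|g\|_A\leq\|g\|(r_0-\beta^*)+g(x_0)-Mg(y_0),
\]
and applying the same ball-containment to $g-f$ provides control of $\|g-f\|_A$ on a w*-slice in the direction $x_0/\|x_0\|$. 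The delicate step is the parameter balancing: the cushion $\beta^*$ must be driven nearly to the threshold so the slack $M-\beta^*$ (which ultimately bounds the $A$-diameter of the slice) shrinks fast relative to $\alpha$, while $\eta$ must be kept small compared with $M^{-1}$. I expect this balancing, together with carefully turning the containment of $A$ in the resulting ``lens'' $B[w,\rho]\cap B[-w,\rho]$ (with $w=(x_0-My_0)/\alpha,\ \rho=(r_0-\beta^*)/\alpha$) into a quantitative bound on $\|g-f\|_A$, to be the main technical burden of the lemma.
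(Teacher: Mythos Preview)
Your arguments for $(a)\Leftrightarrow(d)$, $(b)\Leftrightarrow(d)$ and $(b)\Rightarrow(c)$ are sound (modulo routine constants), and your $(b)\Rightarrow(c)$ is in fact more self-contained than the paper's, which quotes \cite[Theorem~2.6]{BGG} as a black box whereas you build the big ball directly from the modulus $d_1$.

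The genuine gap is $(c)\Rightarrow(b)$. Your scheme applies $(c)$ to $C^*=My_0+\alpha A$ with cushion $\beta^*$, obtains a ball $B[x_0,r_0]$, and then tries to convert the containment into the functional inequality
\[
\alpha\|h\|_A \le \|h\|(r_0-\beta^*) + h(x_0) - Mh(y_0)\qquad(h\in X^*),
\]
equivalently $A\subseteq B[w,\rho]\cap B[-w,\rho]$ with $\rho=(r_0-\beta^*)/\alpha$. The problem is that condition $(c)$ gives \emph{no} control whatsoever on $r_0$; it may be enormous compared with $M$, $\alpha$, $\beta^*$. Consequently $\rho$ is uncontrolled, the lens need not be thin, and for $h=g-f$ your inequality reads $\alpha\|g-f\|_A\le 2(r_0-\beta^*)+\cdots$, with a leading term you cannot kill by any a priori choice of $M,\alpha,\eta,\beta^*$ (these are fixed \emph{before} $(c)$ hands you $r_0$). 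No ``parameter balancing'' can repair this, because the adversary producing the ball in $(c)$ moves last.

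What the paper does instead is the step your plan is missing: it builds $D=\ol{\mathrm{aco}}(C\cup\{z_0\})$ and $A=\{y\in D:f(y)\ge\e/4\}$ so that $f$ is, by construction, positive on $A$; then from the separating ball it forms the slice $S=\{g:g(x_0/\|x_0\|)>(r_1+\beta)/\|x_0\|\}$, shows every $g\in S$ is also positive on $A$, and invokes \emph{Phelps' Lemma} (applied to the seminorm $\|\cdot\|_D$) to conclude $\|f-\lambda g\|_D<\e/2$ for an appropriate $\lambda$. The point $z_0$ then pins $\|f\|_D,\|g\|_D\in[1,1+\e/2]$, forcing $\lambda\approx 1$. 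Phelps' Lemma is precisely the device that converts ``$g>0$ on a half of $D$ where $f>0$'' into ``$g$ close to a multiple of $f$ in $\|\cdot\|_D$''; without it (or an equivalent substitute) the uncontrolled radius $r_0$ blocks the direct estimate you are attempting.
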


\begin{proof}
$(a) \implies (b)$: For $C \in \C$, let $f \in \bigcap_{\e >0}\ol{D(M_{\e, C}(X))}^{\tau}$.
Let $\e >0$ be given. We have $f \in \ol{D(M_{\e/4, C}(X))}^{\tau}$. So, there exist $\del >0$ and $x \in M_{\e/4, \del, C}(X)$ and $g \in D(x)$ such that $\|f-g\|_C <\e/4$.

Since $x \in M_{\e/4, \del, C}(X)$, we have $d_1(C, x, \del) < \e/4$. By Lemma~\ref{lem1}$(i)$, for $\alpha = \e\del/8$,
\[
d_2(C, x, \alpha) \leq d_1(C, x, \del) + \frac{2\alpha}{\del} <\e/4 + \e/4 =\e/2.
\]
Hence, $diam_C(S(B(X^*), x, \alpha) <\e/2$.

Since $g \in S(B(X^*), x, \del)$, $S(B(X^*), x, \del) \subseteq B_C(f, \e)$.

$(b) \implies (c)$: Let $\beta \geq 0$ and $C \in \C$ such that $\inf f(\ol{C +\beta B(X)}) >0$.

Let $\inf f(\ol{C +\beta B(X)})=\alpha >0$. Now, there exist $\del >0$ and $x \in S(X)$ such that
\[
S := S(B(X^*), x, \del) \subseteq B_C(f, \alpha).
\]

Choose $g \in S$ and $c \in C$. We have,
\[
g(c) = f(c)- \|f-g\|_C > \alpha +\beta -\alpha = \beta.
\]
Hence, for $y \in \ol{C +\beta B(X)}$, $g(y) >0$.

So, by \cite[Theorem 2.6]{BGG}, there is a closed ball $B[x_0, r_0]$ containing $C +\beta B(X)$ such that $0 \notin B[x_0, r_0]$.

$(c) \implies (b)$: Let $C \in \C$ and $\e >0$ be given.

Let $z_0 \in X$ be such that $\|z_0\|=1 + \e/2$ and $f(z_0) >1 +\e/4$.
Let $D = \ol{aco}(C \cup \{z_0\})$. We have $D \in \C$.

Consider $A:=\{x \in D : f(x) \geq \e/4\}$. We have $A \in \C$.

For $\beta = \frac{\e/4}{1 +\e/4}$, $f(\ol{A +\beta B(X)}) \geq \e/4 - \beta >0$.
So, there exists a closed ball $B[x_0, r_0]$ containing $\ol{A +\beta B(X)}$ such that $0 \notin B[x_0, r_0]$.

So, for $r_1 = r_0 -\beta$ and $x_1 = x_0$,
$A \subseteq B[x_1, r_1]$ and $d(0, B[x_1, r_1]) >\beta$.

Consider $S = \{g \in B(X^*) : g(x_1/\|x_1\|) >(r_1 + \beta)/\|x_1\|\}$.

For $g \in S$, $g(x_1/\|x_1\|) >(r_1 + \beta)/\|x_1\|$. So, $g(x_1) >r_1 + \beta$ which implies $\inf g(B[x_1, r_1]) \geq \beta$.
Thus, $g(x) >0$ for all $x \in A$.
Therefore, applying Phelps' Lemma \cite[Theorem 2.5]{BGG} to the semi-norm $\|\cdot\|_D$,
\[
\left\|f - \frac{\|f\|_D}{\|g\|_D}g\right\|_D <\e/2.
\]

Consider $\ds y_0 = \frac{\e}{4} \frac{z_0}{f(z_0)}$. Since $f(z_0) > \e/4$, $D$ is absolutely convex, and $f(y_0) = \e/4$, $y_0 \in A$, and hence, $g(y_0) > \beta$.

Therefore, $\ds g(z_0) = \frac{f(z_0) g(y_0)}{\e/4} > \frac{(1+\e/4) \beta}{\e/4} = 1$ and so, $\|g\|_D \geq 1$.

Since $f, g \in B(X^*)$ and $D \subseteq (1+ \e/2) B(X)$,
\[
1 \leq \|f\|_D, \|g\|_D \leq 1 + \e/2.
\]
Hence,
\[
\big|\|f\|_D - \|g\|_D\big| <\e/2.
\]
Finally,
\[
\|f - g\|_C \leq\|f - g\|_D \leq \left\|f - \frac{\|f\|_D}{\|g\|_D}g\right\|_D + \big|\|f\|_D - \|g\|_D\big| <\e.
\]

$(b) \implies (d)$: Let $f \in S(X^*)$ be a strong $\C$-semidenting point.
Let $\e >0$ and $C \in \C$ be given.
There exist $\del >0$ and $x \in S(X)$ such that
\[
S(B(X^*), x, \del) \subseteq B_C(f, \e/4).
\]
So, $d_2(C, x, \del) = diam_{C}(S(B(X^*), x, \del)) <\e/2$ and $D(x) \subseteq B_C(f, \e/4)$.
Now, by Lemma ~\ref{lem1}, $d_3(C, x, \del) = diam_{C}(D(S(X)\cap B (x, \delta))) <\e/2$.
Hence,
\[
D(S(X)\cap B (x, \delta)) \subseteq B_C(f, \e).
\]

$(d) \implies (a)$: To show $f \in \ol{D(M_{\e, C}(X))}^{\tau}$ for every $C \in \C$ and $\e > 0$, let $0 <\eta <\e$ and $C_1 \in \C$. Enough to show that there exist $x \in M_{\e, C}$ and $g \in D(x)$ such that $\|f-g\|_{C_1} < \eta$.

Let $C_0 = C \cup C_1 \in \C$. By $(d)$, there exist $\del >0$ and $x \in S(X)$ such that $D(S(X) \cap B(x, \delta)) \subseteq B_{C_0}(f, \eta/2)$.

So, $d_3(C_0, x, \del) \leq \eta$ and $D(x) \subseteq B_{C_0}(f, \eta/2)$. By Lemma ~\ref{lem1}, $d_1(C_0, x, \del/2) \leq d_3(C_0, x, \del) \leq \eta$. Hence, $x \in M_{\eta, \del/2, C_0} \subseteq M_{\e, \del/2, C} \subseteq M_{\e, C}$ and $\|f-g\|_{C_1} < \eta$ for any $g \in D(x)$.
\end{proof}

Here is our main theorem characterising the strong $\C$-MIP. Again, we obtain a complete analogue of \cite[Theorem 2.1]{GGS}.

\begin{theorem} \label{thm1}
For a Banach space $X$, the following are equivalent:
\bla
\item $X$ has the strong $\C$-MIP.
\item Every $f \in S(X^*)$ is a strong $\C$-semidenting point.
\item For every $\e >0$ and $C \in \C$, $D(M_{\e, C}(X))$ is $\tau$-dense in $S(X^*)$.
\item The duality map on $X$ is norm-$\tau$ quasicontinuous.
\item Every support mapping in $X$ maps norm dense sets in $S(X)$ to $\tau$-dense sets in $S(X^*)$.
\el
\end{theorem}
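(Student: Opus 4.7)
The plan is to use Lemma~\ref{lem3}, which is already a pointwise version of the theorem: conditions $(b)$, $(c)$ and $(d)$ of the theorem assert precisely that every $f \in S(X^*)$ satisfies conditions $(b)$, $(a)$ and $(d)$ of Lemma~\ref{lem3} respectively (for $(c)$, unpack the definition of $H$). Hence $(b) \iff (c) \iff (d)$ is immediate from Lemma~\ref{lem3}, and it remains to connect these to $(a)$ and $(e)$.

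For $(a) \iff (b)$ I would pass through the separation form Lemma~\ref{lem3}$(c)$. Assuming $(a)$ and given $f \in S(X^*)$, $\beta \geq 0$ and closed convex $C \in \C$ with $\inf f(\ol{C+\beta B(X)}) >0$, the set $\ol{C + \beta B(X)}$ misses $0$, so the strong $\C$-MIP directly yields a closed ball containing it and missing $0$. Conversely, assuming $(b)$, take any closed convex $C \in \C$, $\beta \geq 0$ and $x_0 \notin \ol{C + \beta B(X)}$. Hahn--Banach provides an $f \in S(X^*)$ strictly separating $x_0$; translating everything by $-x_0$ (and noting $C - x_0 \in \C$ by compatibility) gives $\inf f(\ol{(C-x_0)+\beta B(X)}) >0$, so Lemma~\ref{lem3}$(c)$ produces a closed ball $B[y_0, r_0] \supseteq (C-x_0) + \beta B(X)$ with $0 \notin B[y_0, r_0]$; translating back, $B[y_0 + x_0, r_0] \supseteq \ol{C+\beta B(X)}$ avoids $x_0$.

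For $(b) \implies (e)$, fix a support mapping $\phi$, a norm-dense $A \subseteq S(X)$, and a basic $\tau$-neighbourhood $B_C(f, \e)$ of some $f \in S(X^*)$. Lemma~\ref{lem3}$(d)$ yields $\del >0$ and $x \in S(X)$ with $D(S(X)\cap B(x, \del)) \subseteq B_C(f, \e)$; any $y \in A \cap B(x, \del)$ then satisfies $\phi(y) \in D(y) \subseteq B_C(f, \e)$, so $\phi(A)$ is $\tau$-dense in $S(X^*)$.

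The main obstacle, and hence the most delicate step, is $(e) \implies (d)$, which I would handle by contrapositive. Suppose $(d)$ fails at some triple $(f, \e, C)$: for every $\del>0$ and every $x \in S(X)$ the inclusion $D(S(X) \cap B(x, \del)) \subseteq B_C(f, \e)$ is violated. Set
\[
A = \{y \in S(X) : \exists\, g \in D(y) \text{ with } \|f-g\|_C \geq \e\}.
\]
The failure of $(d)$ says exactly that $A$ meets every open ball in $S(X)$, i.e., $A$ is norm-dense. Define a support mapping $\phi$ by choosing, for each $y \in A$, some $\phi(y) \in D(y)$ with $\|f-\phi(y)\|_C \geq \e$, and choosing $\phi(y) \in D(y)$ arbitrarily for $y \notin A$. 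Then $\phi(A) \cap B_C(f, \e) = \emptyset$, so $\phi(A)$ is not $\tau$-dense in $S(X^*)$, contradicting~$(e)$.
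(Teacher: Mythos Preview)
Your proof is correct and follows essentially the same route as the paper: the equivalence of $(a)$--$(d)$ is deduced from Lemma~\ref{lem3} (you spell out the Hahn--Banach/translation step linking the strong $\C$-MIP to condition $(c)$ of the lemma, which the paper leaves implicit), and $(d) \Leftrightarrow (e)$ is handled by the same density-plus-contrapositive construction of a support mapping. Your version of $(e) \Rightarrow (d)$, defining $A$ directly as the set of points $y$ admitting a ``bad'' support functional, is a slightly cleaner packaging of the paper's argument, which instead indexes the dense set by pairs $(n,x)$.
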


\begin{proof}
Equivalence of $(a)$, $(b)$, $(c)$ and $(d)$ follows from Lemma \ref{lem3}.

$(d) \implies (e)$: Let $A \subseteq S(X)$ be norm dense in $S(X)$. Let $\phi : S(X) \to S(X^*)$ be a support mapping. Let $f \in S(X^*)$, $C \in \C$ and $\e >0$. It is enough to find $x_0 \in A$ such that $\|f - \phi(x_0)\|_C < \e$.

By $(d)$, there exist $\delta > 0$ and $x \in S(X)$ such that $D(S_X \cap B(x, \delta)) \subseteq B_C(f, \e)$. Since, $A$ is dense in $S(X)$, there exists $x_0 \in A$ such that $\|x_0 - x\| <\del$. If follows that $D(x_0) \subseteq B_C(f, \e)$. Therefore, for any support mapping $\phi$, $\|f - \phi(x_0)\|_C < \e$.

$(e) \implies (d)$: Suppose the duality map is not norm-$\tau$ quasicontinuous. Then there exist $f \in S(X^*)$, $\e >0$ and $C \in \C$, such that for any $x \in S(X)$ and $n \in \mathbb{N}$, there exists $y_{n, x} \in S(X)$ and $g_{n, x} \in D(y_{n, x})$ such that $\|y_{n, x} - x\| < 1/n$ and $\|g_{n, x} - f\|_C \geq \e$.

Let $\phi : S(X) \to S(X^*)$ be a support mapping that maps $y_{n, x}$ to $g_{n, x}$. Let $A = \{y_{n, x} : x \in S(X), n\in \mathbb{N}\}$. Then $A$ is norm dense in $S(X)$, but $\phi(A)$ is not $\tau$-dense in $S(X^*)$.
\end{proof}

\begin{remark} \rm
For $\C = \mathcal{K}$ or $\mathcal{W}$, Vanderwerff \cite{Va} observed that the strong $\C$-MIP is equivalent to the following condition:
\bla
\item[$(b')$]
For every $\e >0$ and $C \in \C$, the points in $S(X^*)$ that lie in w*-slices of $B(X^*)$ having $C$-diameter $\leq \e$ is $\tau$-dense in $S(X^*)$.
\el

It is easy to see that this condition is equivalent to $(b)$ above and holds for any compatible family $\C$.
Now consider the following condition:
\bla
\item[$(b'')$]
The $\C$-denting points of $B(X^*)$ are $\tau$-dense in $S(X^*)$.
\el
Clearly, $(b'') \implies (b)$. Moreover, if every w*-slice of $B(X^*)$ contains a $\C$-denting point, then both are equivalent. Notice that this condition holds in any Asplund space.
\end{remark}

Since the $\tau_{\mathcal{F}}$ and $\tau_{\K}$ topologies coincide on bounded sets, we obtain
\begin{corollary}
For a Banach space $X$, the following are equivalent:
\bla
\item $X$ has the strong $\mathcal{K}$-MIP.
\item $X$ has the strong $\mathcal{F}$-MIP.
\item The extreme points of $B(X^*)$ are w*-dense in $S(X^*)$.
\el
\end{corollary}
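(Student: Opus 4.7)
The plan is to apply Theorem~\ref{thm1}, equivalence $(a)\iff(b)$, separately to the compatible classes $\K$ and $\mathcal{F}$, translating (a) into ``every $f\in S(X^*)$ is a strong $\K$-semidenting point'' and (b) into ``every $f\in S(X^*)$ is a strong $\mathcal{F}$-semidenting point''. What then remains is to verify $(a)\iff(b)$ using the coincidence of $\tau_{\mathcal{F}}$ and $\tau_{\K}$ on $B(X^*)$, and $(b)\iff(c)$ using Krein--Milman together with the classical fact (sometimes called Choquet's lemma) that the w*-slices of $B(X^*)$ through a fixed extreme point form a neighbourhood base at that point in the w*-topology.

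For $(a)\iff(b)$, one direction is immediate because every finite set is compact, so every strong $\K$-semidenting point is automatically strong $\mathcal{F}$-semidenting. For the converse, fix $f\in S(X^*)$, a compact convex $A\subseteq X$, and $\e>0$. The set $B_{A}(f,\e)\cap B(X^*)$ is bw*-open, hence w*-open, in $B(X^*)$, so it contains a basic w*-neighbourhood $B_{F}(f,\delta)\cap B(X^*)$ for some finite $F\subseteq X$ and $\delta>0$. The strong $\mathcal{F}$-semidenting hypothesis supplies a w*-slice $S\subseteq B_{F}(f,\delta)$; since $S\subseteq B(X^*)$, in fact $S\subseteq B_{A}(f,\e)$, which is the strong $\K$-semidenting condition at $f$.

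For $(b)\iff(c)$, unpacking the definition, $f$ is strong $\mathcal{F}$-semidenting iff every basic w*-neighbourhood of $f$ contains a w*-slice of $B(X^*)$. Assuming (c), given a w*-neighbourhood $U$ of $f\in S(X^*)$, w*-density of extreme points furnishes an extreme point $g\in U$, and Choquet's lemma then produces a w*-slice $S$ with $g\in S\subseteq U$, giving (b). Conversely, any w*-slice $S=\{h\in B(X^*):h(x)>1-\delta\}$ (with $x\in S(X)$) meets $\mathrm{ext}(B(X^*))$, for otherwise the extreme points would all satisfy $h(x)\le 1-\delta$ and Krein--Milman applied to the w*-closed convex set $\{h\in B(X^*):h(x)\le 1-\delta\}$ would force $1=\|x\|\le 1-\delta$, a contradiction; hence a w*-neighbourhood of $f$ containing a slice also contains an extreme point, yielding (c). The only slightly non-routine ingredient is the invocation of Choquet's lemma; everything else is a direct assembly of Theorem~\ref{thm1}, the topology coincidence, and Krein--Milman.
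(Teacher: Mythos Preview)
Your argument is correct. The equivalence $(a)\iff(b)$ is handled exactly as in the paper: both reduce, via Theorem~\ref{thm1}, to the strong $\C$-semidenting condition and then invoke the coincidence of $\tau_{\mathcal F}$ and $\tau_{\mathcal K}$ on the bounded set $B(X^*)$. For the link with $(c)$, the paper does not argue directly; it simply imports Vanderwerff's result \cite{Va} that strong $\K$-MIP is equivalent to w*-density of the extreme points of $B(X^*)$ in $S(X^*)$. You instead supply a self-contained proof of $(b)\iff(c)$ using Choquet's lemma and Krein--Milman, which is a perfectly valid and slightly more informative route: it makes transparent that ``strong $\mathcal F$-semidenting'' at $f$ means precisely that every w*-neighbourhood of $f$ in $B(X^*)$ contains a w*-slice, a condition that is visibly tied to the extreme-point structure. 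The trade-off is that your proof needs the extra classical ingredient (Choquet's lemma), whereas the paper offloads this to the cited reference.
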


Though we have given enough evidence that the strong $\C$-semidenting points are more natural than the $\C$-semidenting points, one question remains unanswered: Can we characterise $\C$-MIP in terms of strong $\C$-semidenting points?

So let us consider the condition: \bla
\item[$(a')$]The cone generated by strong $\C$-semidenting points of $B(X^*)$ is $\tau$-dense in~$X^*$.
  \el
It is clear from Theorem~\ref{T1} that this condition is generally stronger than the $\C$-MIP and is equivalent to it if every w*-slice of $B(X^*)$ contains a strong $\C$-semidenting point, a condition slightly weaker than the one that gives equivalence in Theorem~\ref{T1}.

It is well known that a Banach space with a Fr\'echet smooth norm has the MIP. Indeed, this was known to Mazur himself. Notice that a compatible class $\C$ is a bornology as defined in \cite[pg. 64]{Ph}. Using the definition of $\C$-smoothness discussed in \cite[pg. 64]{Ph}, we have the following corollary.
\begin{corollary}
If the norm on $X$ is $\C$-smooth, then $X$ has the strong $\C$-MIP.
\end{corollary}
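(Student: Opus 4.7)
The plan is to establish condition $(c)$ of Theorem~\ref{thm1} — that $D(M_{\e, C}(X))$ is $\tau$-dense in $S(X^*)$ for every $\e>0$ and $C\in\C$ — directly from $\C$-smoothness of the norm, and then invoke the theorem to conclude the strong $\C$-MIP.

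Recall that, in the sense of \cite[pg.~64]{Ph}, the norm is $\C$-smooth precisely when, for every $x\in S(X)$, it is G\^ateaux differentiable at $x$ with derivative $f\in D(x)$, and the limit $\lim_{t\to 0}\frac{\|x+ty\|-\|x\|}{t}=f(y)$ is uniform for $y$ in each $C\in\C$. Combining the one-sided uniform estimates at $+t$ and $-t$ (both converging to $f(y)$ uniformly on $C$) shows that for any prescribed $\e>0$ and $C\in\C$ there exists $\delta>0$ with
\[
\sup_{0<\lambda<\delta,\ y\in C}\frac{\|x+\lambda y\|+\|x-\lambda y\|-2}{\lambda}<\e,
\]
i.e.\ $d_1(C,x,\delta)<\e$, so $x\in M_{\e,\delta,C}(X)\subseteq M_{\e,C}(X)$. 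Since this holds at every $x\in S(X)$, we obtain $M_{\e,C}(X)=S(X)$ for every $\e>0$ and $C\in\C$.

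Consequently, $D(M_{\e,C}(X))=\bigcup_{x\in S(X)}D(x)$ is precisely the set of norm-attaining unit functionals, which by the Bishop--Phelps theorem is norm dense in $S(X^*)$. Because $\tau_\C$ is weaker than the dual norm topology — each seminorm $\|\cdot\|_C$ is dominated by a scalar multiple of $\|\cdot\|$ since $C$ is bounded — the set $D(M_{\e,C}(X))$ is also $\tau$-dense in $S(X^*)$. Applying the implication $(c)\Rightarrow (a)$ of Theorem~\ref{thm1} then yields the strong $\C$-MIP.

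The only place that requires any genuine care is the passage from the one-sided definition of $\C$-smoothness to the symmetric uniform estimate on $d_1(C,x,\delta)$; once this is in hand, the argument reduces to an appeal to Bishop--Phelps and to Theorem~\ref{thm1}. There is no serious obstacle, which is precisely why the statement appears as a corollary.
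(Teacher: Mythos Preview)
Your argument is correct: $\C$-smoothness forces $d_1(C,x,\delta)\to 0$ for every $x\in S(X)$, so $M_{\e,C}(X)=S(X)$, and Bishop--Phelps combined with $\tau_\C\leq\|\cdot\|$ gives the $\tau$-density required in Theorem~\ref{thm1}$(c)$. The paper states the result as an immediate corollary and gives no explicit proof, so there is nothing further to compare against.
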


\begin{remark} \rm
For $\C = \mathcal{K}$ and $\mathcal{W}$, this was observed by Vanderwerff in \cite{Va}.
\end{remark}
\pagebreak

\section{$\C$-UMIP and Strong $\C$-UMIP}

In this section, we discuss uniform versions of $\C$-MIP and strong $\C$-MIP.

The following quantitative result is contained in the proof of \cite[Theorem 2.6]{BGG}. We will need it repeatedly.

\begin{lemma} \label{lem2}
Let $A \subseteq X$ be such that $M = \sup \{\|x\| : x \in A\}$ and $d = d(0, A) > 0$. Suppose there exist $x_0 \in S(X)$ and $0 < \gamma < 1$ such that
\[
S = \{f \in B(X^*) : f(x_0) > \gamma\} \subseteq \{f \in B(X^*) : f(x) > 0 \mbox{ for all } x\in A\}.
\]
Then for $\lambda \geq M/(1-\gamma)$,
\[A \subseteq B\left[\lambda x_0, \lambda - \frac{d(1-\gamma)}{1+\gamma}\right].
\]
\end{lemma}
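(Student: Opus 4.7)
The plan is to argue pointwise: fix $x \in A$ and establish
\[
\|x - \lambda x_0\| \leq \lambda - \frac{d(1-\gamma)}{1+\gamma}.
\]
The key reduction is to produce a scalar $\mu > 0$ with $\|x - \mu x_0\| \leq \mu\gamma$, i.e., to show that $x$ lies in the closed convex cone $\Gamma := \mathrm{cone}(x_0 + \gamma B(X))$. Granting this, the triangle inequality will finish the argument.

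To obtain the cone membership I would first extend the slice hypothesis by w*-continuity. Using Hahn--Banach, pick $g_0 \in B(X^*)$ with $g_0(x_0) = 1$; then for $f \in B(X^*)$ with $f(x_0) \geq \gamma$, the sequence $(1-1/n)f + (1/n)g_0$ lies in $S$ and converges to $f$, so $f(x) \geq 0$. I would then run a direct Hahn--Banach separation: suppose $x \notin \Gamma$ for contradiction. Since $\gamma < 1$, the set $x_0 + \gamma B(X)$ is bounded away from $0$, which makes $\Gamma$ a closed convex cone. A separating functional $f$ satisfies $f(y) \leq 0$ on $\Gamma$ and $f(x) > 0$; evaluating on $x_0 + \gamma B(X)$ gives $f(x_0) + \gamma\|f\| = \sup\{f(y) : y \in x_0 + \gamma B(X)\} \leq 0$. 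Rescaling, $g := -f/\|f\|$ is a unit functional with $g(x_0) \geq \gamma$ and $g(x) < 0$, contradicting the extended hypothesis. Hence $x = \mu(x_0 + \gamma b)$ for some $\mu \geq 0$, $\|b\|\leq 1$; and since $\|x\|\geq d > 0$ we must have $\mu > 0$.

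The bounds on $\mu$ are then routine from the two-sided triangle inequality applied to $\|x - \mu x_0\| \leq \mu\gamma$, giving $\mu(1-\gamma) \leq \|x\| \leq \mu(1+\gamma)$, so
\[
\frac{d}{1+\gamma} \leq \mu \leq \frac{M}{1-\gamma} \leq \lambda.
\]
Therefore
\[
\|x - \lambda x_0\| \leq \|x - \mu x_0\| + (\lambda - \mu) \leq \mu\gamma + (\lambda - \mu) = \lambda - \mu(1-\gamma) \leq \lambda - \frac{d(1-\gamma)}{1+\gamma}.
\]
The main obstacle is the cone-membership step: converting the one-way slice implication $f(x_0) > \gamma \Rightarrow f(x) > 0$ into a geometric statement that $x$ sits inside the cone about $x_0$. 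The separation above is the cleanest route; equivalently one may invoke the bipolar theorem for $\Gamma$, or simply extract these estimates from the (Phelps-lemma-flavoured) proof of \cite[Theorem 2.6]{BGG}, which carries exactly this content.
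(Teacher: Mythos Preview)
Your argument is correct. The paper itself does not supply a proof of this lemma; it simply points to the proof of \cite[Theorem~2.6]{BGG} and extracts the quantitative estimate from there. Your route is a clean, self-contained substitute: the Hahn--Banach separation from the closed convex cone $\Gamma = \mathrm{cone}(x_0 + \gamma B(X))$ is exactly the dual reformulation of the slice hypothesis, and the closedness of $\Gamma$ (which you use) does hold since $d(0, x_0 + \gamma B(X)) \geq 1-\gamma > 0$ forces any convergent sequence $\mu_n(x_0+\gamma b_n)$ to have bounded $\mu_n$, hence a subsequential limit $\mu^*$ with $\|y-\mu^* x_0\|\le \mu^*\gamma$. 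The final triangle-inequality computation matches the constants in the statement precisely. Compared with the Phelps-lemma machinery invoked in \cite{BGG}, your cone-separation argument is more elementary and makes the geometric content (that $A$ lies inside the cone of half-angle $\arcsin\gamma$ about $x_0$) completely transparent; the paper's citation buys brevity at the cost of that transparency.
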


We observe that there are three ways of defining uniform versions of the strong $\C$-MIP ($(a)$, $(b)$ and $(c)$ below) and they are equivalent.

\begin{theorem} \label{thm2}
In a Banach space $X$, the following are equivalent:
\bla
\item For every $\e >0$, $\beta \geq 0$, $M \geq 2$ and $0 <\eta < \e$, there exists $K >0$ such that for every closed convex $C \in \mathcal{C}$ with $diam (C) \leq M$ and $d(0, C +\beta B(X)) \geq \e$, there exist $z_0 \in X$, $0< r \leq K$ such that $C +\beta B(X) \subseteq B[z_0, r]$ and $d(0, B[z_0, r]) > \eta$.

\item For every $\e >0$, $M \geq 2$ and $0 <\eta < \e$, there exists $K >0$ such that for every closed convex $C \in \mathcal{C}$ with $diam (C) \leq M$ and $d(0, C) \geq \e$, there exist $z_0 \in X$, $0< r \leq K$ such that $C \subseteq B[z_0, r]$ and $d(0, B[z_0, r]) > \eta$.

\item For every $\e >0$, $\beta \geq 0$ and $M \geq 2$, there exist $K >0$ and $0 <\eta < \e$, such that for every closed convex $C \in \mathcal{C}$ with $diam (C) \leq M$ and $d(0, C +\beta B(X)) \geq \e$, there exist $z_0 \in X$, $0< r \leq K$ such that $C +\beta B(X) \subseteq B[z_0, r]$ and $d(0, B[z_0, r]) > \eta$.

\item For every $\e >0$, there exists $\del >0$ such that for every $C \in \C$, $C \subseteq B(X)$ and $f \in S(X^*)$, there is $x \in S(X)$ such that
\[
S(B(X^*), x, \del) \subseteq B_C(f, \e).
\]

\item For every $\e >0$, there exists $\del >0$ such that for every $C \in \C$, $C \subseteq B(X)$ and $f \in S(X^*)$, there is $x \in S(X)$ such that $D(S_X \cap B(x, \delta)) \subseteq B_C(f, \e)$.

\item For every $\e >0$, there exists $\del >0$ such that for every $C \in \C$, $C \subseteq B(X)$ and $f \in S(X^*)$, there exists $x \in M_{\e, \del, C}$ such that $D(x) \subseteq B_C(f, \e)$.
\el
\end{theorem}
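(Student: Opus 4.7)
The plan is to split the six conditions into two blocks---the geometric $(a), (b), (c)$ and the slice-type $(d), (e), (f)$---prove equivalences inside each block, and bridge them via $(c) \iff (d)$.

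For the slice block, $(d) \iff (e) \iff (f)$ will be direct uniform renditions of the corresponding parts of Lemma~\ref{lem3}, whose proofs rest on Lemma~\ref{lem1} and already produce constants depending only on the requested $\e$; I only need to check that every intermediate quantity remains uniform in $C$ and $f$, after which the arguments transport essentially verbatim.

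For the geometric block, $(a) \Rightarrow (b)$ is immediate at $\beta = 0$, and $(b) \Rightarrow (a)$ is a short distance-shift: $d(0, C + \beta B(X)) \geq \e$ forces $d(0, C) \geq \e + \beta$, so applying $(b)$ with parameters $\e + \beta$ and $\eta + \beta$ yields a ball for $C$ which, enlarged to $B[z_0, r + \beta]$, handles $C + \beta B(X)$. $(a) \Rightarrow (c)$ is trivial, while $(c) \Rightarrow (a)$ needs an ``inflation trick'': given $0 < \eta < \e$, apply $(c)$ with $\beta'' = \beta + \eta$ and $\e'' = \e - \eta$ to cover the inflated set $C + (\beta + \eta) B(X)$ by some ball $B[z_0, r]$. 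Since $(C + \beta B(X)) + \eta B(X) \subseteq B[z_0, r]$, a one-line direction argument in any normed space forces $C + \beta B(X) \subseteq B[z_0, r - \eta]$, and shrinking the radius by $\eta$ adds exactly $\eta$ to the distance from $0$.

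The bridge $(c) \iff (d)$ is where the substantive work lies. For $(c) \Rightarrow (d)$ I would imitate Lemma~\ref{lem3} $(c) \Rightarrow (b)$ with uniform tracking: for given $\e$, $C \subseteq B(X)$ in $\C$, and $f \in S(X^*)$, choose $z_0$ with $\|z_0\| = 1 + \e/2$ and $f(z_0) > 1 + \e/4$, set $D = \ol{aco}(C \cup \{z_0\})$ and $A = \{x \in D : f(x) \geq \e/4\}$, feed $A$ into $(c)$ with $\beta = (\e/4)/(1 + \e/4)$ and $M = 2 + \e$, and convert the resulting ball into a w*-slice whose width is bounded below uniformly in $C, f$ (the estimate $\|x_1\| \leq K + 1 + \e/2$ prevents the slice from collapsing), closing with Phelps' Lemma. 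The main obstacle is $(d) \Rightarrow (c)$, since $(d)$ is formulated only for $C \subseteq B(X)$ whereas $(c)$ places no bound on $\sup_C \|c\|$. I would resolve this by a two-case split at $R := M + \beta + \e$: if some $c_0 \in C$ has $\|c_0\| > R$, the ball $B[c_0, M + \beta]$ already covers $C + \beta B(X)$ at distance $> \e$ from $0$; otherwise $C \subseteq R\,B(X)$, so $\tilde C := C/R \in \C$ lies in $B(X)$, and applying $(d)$ to $\tilde C$ with a Hahn--Banach functional separating $0$ from $C + \beta B(X)$ and tolerance $\e/(2R)$ yields a w*-slice $S(B(X^*), x, \del)$ on which every $g$ satisfies $g \geq \e/2$ on $C + \beta B(X)$. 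Lemma~\ref{lem2} then produces a ball of radius at most $(R + \beta)/\del$ at distance at least $\e\del/(2 - \del)$ from $0$; any positive $\eta$ below this distance witnesses $(c)$, and the inflation trick above promotes $(c)$ back to $(a)$, closing the cycle.
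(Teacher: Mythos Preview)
Your proposal is correct and takes essentially the same approach as the paper: both bridge the geometric block to the slice block via Phelps' Lemma in one direction and Lemma~\ref{lem2} with a boundedness case split in the other, and both handle $(d) \iff (e) \iff (f)$ through Lemma~\ref{lem1}. The only difference is organizational---the paper proves $(b) \Rightarrow (d)$ and $(c) \Rightarrow (d)$ separately and closes via $(d) \Rightarrow (a)$, whereas your distance-shift and inflation tricks establish $(a) \iff (b) \iff (c)$ directly, so you need only the single bridge $(c) \iff (d)$; this is a mild streamlining, not a different method.
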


\begin{proof}
Notice that for any bounded $C \subseteq X$ and $r, \beta > 0$,
\[
C \subseteq B[z_0, r] \iff C + \beta B(X) \subseteq B[z_0, r+\beta]
\]
and
\[
d(0, B[z_0, r + \beta]) = d(0, B[z_0, r]) - \beta
\]
provided $d(0, B[z_0, r]) > \beta$.

Clearly, $(a) \implies (b)$ and $(a) \implies (c)$.

$(b) \implies (d)$: This is a quantitatively more precise version of the proof of Lemma~\ref{lem3} ($(c) \implies (b)$).

Let $\e >0$ be given. Let $\alpha = \frac{\e}{4+\e} > 0$ and $\eta \in (\alpha, \e/4)$.  Choose $K$ for $\e/4$, $\eta$ and $M = 4$. That is, for every $C \in \C$ with $d(0,C) \geq \e/4$ and $diam (C) \leq 4$, there exists a closed ball $B[z_0,r]$ containing $C$ with $r \leq K$ and $d(0, B[z_0,r]) >\eta$.
We will show that $\delta = \frac{\eta-\alpha}{K + \eta}$ works.

Let $C \in \C$ such that $C \subseteq B(X)$ and $f \in S(X^*)$.
As in the proof of Lemma~\ref{lem3} ($(c) \implies (b)$), let $z_0 \in X$ be such that $\|z_0\|=1 + \e/2$ and $f(z_0) >1 +\e/4$ and $D = \ol{aco}(C \cup \{z_0\})$. Again, $D \in \C$ and $\|f\|_D \geq 1$. For $A = \{y \in D: f(y) \geq\e/4\}$, $A \in \C$, diam $(A) \leq 4$ and $d(0, A) \geq \e/4$.

By assumption, there exist $x_0 \in X$ and $r > 0$ such that $A \subseteq B[x_0, r]$, $d(0, B[x_0, r]) > \eta$ and $r \leq K$.
Thus, $\|x_0\| >r + \eta$.
Now let
\[
S := S\left(B(X^*), \frac{x_0}{\|x_0\|}, \del\right) = \left\{g \in B(X^*): g\left(\frac{x_0}{\|x_0\|}\right) > \frac{K+\alpha} {K + \eta}\right\}.
\]

Then, for $g \in S$, we have $g(x_0/\|x_0\|) > (K + \alpha)/(K + \eta) >(r + \alpha)/(r + \eta)$. So, $g(x_0) >(r + \alpha)\|x_0\|/(r + \eta)$. But, $\|x_0\| >r+\eta$. Thus,
$g(x_0) > r + \alpha$. So, $\inf g(B[x_0, r]) > g(x_0) -r > \alpha > 0$. And hence, again as in Lemma~\ref{lem3} ($(c) \implies (b)$), applying Phelps' Lemma \cite[Theorem 2.5]{BGG} to $\|.\|_D$,
\[
\left\|f - \frac{\|f\|_D}{\|g\|_D} g\right\|_D < \e/2.
\]

Now, arguing as in Lemma~\ref{lem3} ($(c) \implies (b)$), we have that for $g \in S$,
\[
\|f - g\|_C \leq\|f - g\|_D < \e.
\]

$(c) \implies (d)$: This is a minor variant of the above proof.

Let $\e >0$ be given. Let $\alpha = \frac{\e}{4+\e} > 0$ and $\beta \in (\alpha, \e/4)$. Choose $K$ and $\eta$ for $\e/4 - \beta$, $\beta$ and $M = 4$. That is, for every $C \in \C$ with $d(0, C +\beta B(X)) \geq \e/4-\beta$ and $diam (C) \leq 4$, there exists a closed ball $B[z_0, r]$ containing $C + \beta B(X)$, $d(0, B[z_0, r]) > \eta >0$ and $r \leq K$.

We will show that $\delta = \frac{\beta-\alpha}{K + \beta}$ works.

Let $C \in \C$ such that $C \subseteq B(X)$ and $f \in S(X^*)$.

As before, let $z_0 \in X$ be such that $\|z_0\|=1 + \e/2$ and $f(z_0) >1 +\e/4$. Let $D = \ol{aco}(C \cup \{z_0\})$ and $A = \{y \in D: f(y) \geq\e/4\}$. Then $D, A \in \C$, $\|f\|_D \geq 1$, diam$(A) \leq 4$ and $d(0, A + \beta B(X)) \geq \e/4 -\beta$.

By assumption, there exist $x_0 \in X$ and $0 < r \leq K$ such that $A +\beta B(X) \subseteq B[x_0, r]$, and $d(0, B[x_0, r]) > \eta$. Thus, $\|x_0\| >r + \eta$ and $A \subseteq B[x_0, r-\beta]$, $d(0, B[x_0, r-\beta]) \geq \eta +\beta$ and $r-\beta \leq K$.

Rest of the proof is identical as above and shows that
\[
S\left(B(X^*), \frac{x_0}{\|x_0\|}, \del\right)
\subseteq B_C(f, \e).
\]

$(d) \implies (a)$: Let $\e >0$, $\beta \geq 0$, $M \geq 2$ and $0 <\eta < \e$ be given.

Let $L = M + \e +2\beta$. Clearly, $L > 1$.

Let $\e_1 = \e/L$ and $\eta_1 = \eta/L$. Choose $0< \del <1$ such that for any $C_1 \in \mathcal{C}$ with $C_1 \subseteq B(X)$ and $f \in S(X^*)$, there exists $x \in S(X)$ such that
\[
S(B(X^*), x, \del) \subseteq B_{C_1}(f, \e_1-\eta_1).
\]

Let $K = (L+\e)/\del$.

Let $C \in \C$ with $diam(C) \leq M$ and $d(0, C +\beta B(X)) \geq \e$. We will show that there is a closed ball $B$ of radius $\leq K$ such that $C +\beta B(X) \subseteq B$ and $d(0, B) >\eta$.

Case I: $C + \beta B(X) \setminus B[0, L] \neq \emptyset$.

If $z \in C + \beta B(X) \setminus B[0, M + \e + 2\beta]$, then
$d(0, B[z, M + 2 \beta]) \geq \e > \eta$, $C + \beta B(X) \subseteq B[z, M + 2 \beta]$ and $M + 2 \beta \leq K$.

Case II: $C +\beta B(X) \subseteq B[0, L]$.

Let $\widetilde{C} = \frac{1}{L}C$ and $\alpha = \beta/L$. Then, $\widetilde{C} \subseteq D := \frac{1}{L}(C + \beta B(X)) = \widetilde{C} + \alpha B(X) \subseteq B(X)$ and $d(0, D) \geq \e_1$. Thus, $d(0, \widetilde{C}) \geq \e_1 + \alpha$. It suffices to show that there is a closed ball $B$ of radius $\leq K/L$ such that $D \subseteq B$ and $d(0, B) > \eta_1$. Let $D' = D + \eta_1 B(X) = \widetilde{C} + (\eta_1 + \alpha) B(X)$.

Choose $f \in S(X^*)$ such that $\inf f(\widetilde{C}) \geq \e_1 + \alpha$. For this $f$, there exists $x \in S(X)$ such that
\[
S :=S(B(X^*), x, \del) \subseteq B_{\widetilde{C}}(f, \e_1-\eta_1).
\]

Let $h \in S$.
For $y \in \widetilde{C}$, we have,
\[
h(y) \geq f(y) - \|f - h\|_{\widetilde{C}} > (\e_1 + \alpha) - (\e_1-\eta_1) = \eta_1 + \alpha.
\]
That is, $h(y) >0$ for all $y \in D'$. So,
\[
S \subseteq \{g \in X^* : g(x) > 0 \text{ for all } x \in D'\}.
\]

By Lemma~\ref{lem2}, we have that for $\lambda = \frac{(1 +  \e_1)}{\del}$
\[
D' \subseteq B\left[\lambda x, \lambda - \frac{d(0, D')\del}{2-\del}\right].
\]
Hence,
\[
D \subseteq B\left[\lambda x, \lambda - \frac{d(0, D')\del}{2-\del}  - \eta_1\right].
\]

Also,
\[
d(0, B\left[\lambda x, \lambda - \frac{d(0, D')\del} {2-\del} - \eta_1\right]) \geq \frac{d(0, D')\del}{2-\del} + \eta_1 > \eta_1
\] and
\[
\lambda - \frac{d(0, D')\del}{2-\del} -\eta_1 \leq \lambda = \frac{1 + \e_1}{\del} = K/L.
\]

This completes the proof.

Equivalence of $(d)$, $(e)$ and $(f)$ follows from Lemma~\ref{lem1}.
\end{proof}

\begin{definition} \rm
For a compatible class $\C$, $X$ is said to have strong the $\C$-UMIP if any of the equivalent conditions of Theorem~\ref{thm2} is satisfied.
\end{definition}

We again observe that

\begin{corollary}
Strong $\mathcal{K}$-UMIP and strong $\mathcal{F}$-UMIP are equivalent.
\end{corollary}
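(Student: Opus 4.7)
The plan is to use characterisation~(d) of Theorem~\ref{thm2} together with the fact, noted earlier in the paper, that on bounded subsets of $X^*$ the topologies $\tau_{\K}$ and $\tau_{\mathcal{F}}$ coincide. The easy direction, strong $\K$-UMIP $\implies$ strong $\mathcal{F}$-UMIP, is immediate: every finite-dimensional compact convex set is in particular compact convex, so the compatible class generated by $\mathcal{F}$ sits inside that generated by $\K$, and the modulus $\delta$ supplied by the $\K$-condition works a fortiori when $C$ is further restricted to $\mathcal{F}$.

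For the converse, assume $X$ has strong $\mathcal{F}$-UMIP, fix $\e > 0$, and let $\delta > 0$ be the modulus provided by that hypothesis at tolerance $\e/2$. Given a compact convex $C \subseteq B(X)$ and $f \in S(X^*)$, we must produce $x \in S(X)$ with $S(B(X^*), x, \delta) \subseteq B_C(f, \e)$. By compactness of $C$, select a finite set $F' \subseteq C$ with $C \subseteq F' + (\e/4) B(X)$, and put $F = \mathrm{co}(F')$. Then $F$ is a finite-dimensional compact convex set contained in $B(X)$, so $F \in \mathcal{F}$, and the $\mathcal{F}$-hypothesis yields $x \in S(X)$ with $S(B(X^*), x, \delta) \subseteq B_F(f, \e/2)$.

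The conclusion follows from a triangle-inequality estimate that encodes the quantitative form of ``$\tau_\K = \tau_{\mathcal{F}}$ on bounded sets''. For any $g$ in the above slice and any $c \in C$, pick $c' \in F' \subseteq F$ with $\|c - c'\| < \e/4$; since $f, g \in B(X^*)$, $\|f - g\| \leq 2$, so
\[
|(f-g)(c)| \leq |(f-g)(c')| + \|f-g\|\,\|c-c'\| \leq \|f-g\|_F + \tfrac{\e}{2} < \tfrac{\e}{2} + \tfrac{\e}{2} = \e.
\]
Thus $\|f-g\|_C < \e$, verifying condition~(d) of Theorem~\ref{thm2} for $\K$.

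There is no genuine obstacle here: the argument rests only on the elementary approximation of compact sets by finite ones and the uniform bound $\|f-g\| \leq 2$ that controls the error term. The mildest subtlety is that $\K$ and $\mathcal{F}$ as originally defined are not compatible classes on the nose, but passing to their compatible-class closures is harmless, since only sets $C \subseteq B(X)$ enter condition~(d) and one may replace any such $C$ by its closure without altering $\|\cdot\|_C$.
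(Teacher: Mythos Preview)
Your proof is correct and is precisely the argument the paper has in mind: the corollary is stated without proof, relying on the earlier remark that $\tau_{\mathcal F}$ and $\tau_{\K}$ agree on bounded subsets of $X^*$, together with characterisation~(d) of Theorem~\ref{thm2}; you have simply written out the quantitative content of that coincidence via a finite $\e/4$-net approximation of the compact set $C$.
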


Now we come to a uniform version of the $\C$-MIP.

\begin{definition} \rm
For a compatible class $\C$, $X$ is said to have the $\C$-UMIP if for every $\e >0$ and $M \geq 2$, there exist $K >0$ and $0 <\eta \leq \e$ such that for every closed convex $C \in \C$ with $diam(C) \leq M$ and $d(0, C) \geq \e$, there exist $z_0 \in X$, $0< r \leq K$ such that $C \subseteq B[z_0, r]$ and $d(0, B[z_0, r]) \geq \eta$.
\end{definition}

It can be similarly characterised in terms of the uniform version of the semidenting point discussed in \cite{CD}. The proof is a slight modification of the arguments of Theorem~\ref{thm2}. We omit the details.

\begin{theorem} \label{thm3}
For a compatible class $\C$, the following are equivalent:
\bla
\item $X$ has the $\C$-UMIP
\item For every $\e >0$, there exists $\del >0$ such that for every $C \in \C$, $C \subseteq B(X)$ and $f \in S(X^*)$, there is $x \in S(X)$ such that
\[
S(B(X^*), x, \del) \subseteq cone\ (B_{C}(f, \e)).
\]
\el
\end{theorem}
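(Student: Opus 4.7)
The plan is to adapt the scheme of Theorem~\ref{thm2} to the cone-weakened setting. The key simplification is that whenever Phelps' Lemma produces $\|f - \lambda g\|_D < \e/2$ with $\lambda = \|f\|_D/\|g\|_D > 0$, we read off $g \in cone(B_D(f, \e/2)) \subseteq cone(B_C(f, \e))$ directly, so the delicate comparison of $\|f\|_D$ with $\|g\|_D$ from Theorem~\ref{thm2} is bypassed. Since the $\C$-UMIP says nothing about $C + \beta B(X)$, the $\beta$-parameter drops out and only one form of each direction need be run.

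For $(a)\Rightarrow(b)$, given $\e > 0$ I would first apply the $\C$-UMIP with parameters $\e_0 = \e/4$ and $M = \max(4, 2+\e)$ to obtain $K > 0$ and $\eta_0 \in (0, \e_0]$. Set $\alpha = \eta_0/2$ and $\del = (K + \alpha)/(K + \eta_0) \in (0,1)$. For given $C \in \C$ with $C \subseteq B(X)$ and $f \in S(X^*)$, reproduce the construction from Lemma~\ref{lem3}$(c)\Rightarrow(b)$: pick $z_0 \in X$ with $\|z_0\| = 1 + \e/2$ and $f(z_0) > 1 + \e/4$, and form $D = \ol{aco}(C \cup \{z_0\})$ together with $A = \{y \in D : f(y) \geq \e/4\}$. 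By the compatibility axioms $D, A \in \C$, while by construction $diam(A) \leq M$ and $d(0, A) \geq \e/4$. The $\C$-UMIP applied to $A$ produces a ball $B[x_0, r]$ with $r \leq K$ and $\|x_0\| - r \geq \eta_0$. For the w*-slice $S := S(B(X^*), x_0/\|x_0\|, \del)$, the choice of $\del$ yields $\inf g(B[x_0, r]) \geq \alpha > 0$ for every $g \in S$ by a simple monotonicity computation, so $g$ is strictly positive on $A$ and $\|g\|_D > 0$. Phelps' Lemma applied with $\|\cdot\|_D$ then delivers $S \subseteq cone(B_C(f, \e))$.

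For $(b)\Rightarrow(a)$, given $\e > 0$ and $M \geq 2$, set $L = M + \e$, $\e_1 = \e/L$, and apply $(b)$ with $\e' = \e_1/2$ to obtain the corresponding $\del > 0$. For closed convex $C \in \C$ with $diam(C) \leq M$ and $d(0, C) \geq \e$, if $C \not\subseteq B[0, L]$ pick $z \in C$ with $\|z\| > L$ and use $B[z, M]$, whose distance from the origin is at least $\e$; otherwise $\widetilde{C} := C/L \subseteq B(X)$ with $d(0, \widetilde{C}) \geq \e_1$, and I choose $f \in S(X^*)$ with $\inf f(\widetilde{C}) \geq \e_1$ and apply $(b)$ to locate $x \in S(X)$ with $S(B(X^*), x, \del) \subseteq cone(B_{\widetilde{C}}(f, \e'))$. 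For any $g$ in this slice, write $g = \mu h$ with $\mu \geq 0$ and $\|f - h\|_{\widetilde{C}} < \e'$; since $g(x) > \del$, necessarily $\mu > 0$, and for $y \in \widetilde{C}$ one has $h(y) > \e_1 - \e' = \e_1/2 > 0$, hence $g(y) > 0$. Lemma~\ref{lem2} applied to $\widetilde{C}$ then yields a ball of radius $\leq 1/(1-\del)$ at distance $\geq \e_1(1-\del)/(1+\del)$ from $0$; rescaling by $L$ produces $\C$-UMIP constants $K = L/(1-\del)$ and $\eta = \e(1-\del)/(1+\del)$. The main point requiring care, but no new idea, is the choice of $\del$ in the first direction so that the positivity estimate $\inf g(B[x_0, r]) \geq \alpha$ holds uniformly for every admissible $r \leq K$; this uniform control is precisely what the $\C$-UMIP hypothesis (as opposed to the pointwise $\C$-MIP) supplies.
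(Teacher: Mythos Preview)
Your approach is correct and is precisely the ``slight modification of Theorem~\ref{thm2}'' that the paper indicates (and then omits): you rerun the $(b)\Leftrightarrow(d)$ argument of Theorem~\ref{thm2}, dropping the $\beta$-parameter and reading the Phelps' Lemma conclusion $\|f-\lambda g\|_D<\e/2$ directly as $g\in cone(B_C(f,\e))$, which is exactly the simplification the cone affords. One notational slip to fix: in the paper's convention $S(B(X^*),x,\del)=\{g:g(x)>1-\del\}$, so your $\del=(K+\alpha)/(K+\eta_0)$ is actually the threshold $1-\del$ (you want $\del=(\eta_0-\alpha)/(K+\eta_0)$), and correspondingly in $(b)\Rightarrow(a)$ the Lemma~\ref{lem2} output gives radius $\leq 1/\del$ and distance $\geq \e_1\del/(2-\del)$ rather than your $1/(1-\del)$ and $\e_1(1-\del)/(1+\del)$; this is a consistent $\del\leftrightarrow 1-\del$ swap that does not affect the argument.
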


\section{Summary} \label{sec5}
In this section, we summarise the interrelations between various notions discussed above. We give counterexamples to some of the reverse implications. As far as we know, the other implications are open.

\[
\begin{array}{ccccccc}
\mbox{MIP} & \overset{\Rightarrow}{\not \Leftarrow} & \mbox{strong $\mathcal{W}$-MIP} & \Rightarrow & \mbox{strong $\mathcal{K}$-MIP} & \Leftrightarrow &\mbox{strong $\mathcal{F}$-MIP} \\
\Updownarrow & & \Downarrow & & \Downarrow \not \Uparrow & & \Downarrow \\
\mbox{MIP} & \overset{\Rightarrow}{\not \Leftarrow} & \mbox{$\mathcal{W}$-MIP} & \Rightarrow & \mbox{$\mathcal{K}$-MIP} & \Rightarrow &\mbox{$\mathcal{F}$-MIP} \\
\end{array}
\]

\subsection*{Examples}
\begin{itemize}
\item Vanderwerff in \cite[Theorem 2.6]{Va} shows that there is a Banach space which has $\mathcal{W}$-MIP but it does not have the strong $\mathcal{K}$-MIP. Thus, the $\mathcal{K}$-MIP does not imply the strong $\mathcal{K}$-MIP. This also shows that there exist Banach spaces with the $\mathcal{W}$-MIP but not the MIP.
\item Vanderwerff in \cite[Theorem 1.1]{Va} also shows that every Banach space can be renormed to have the $\mathcal{W}$-MIP, and hence, the $\mathcal{K}$-MIP. This again shows that the $\mathcal{W}$-MIP does not imply the MIP.
\item In $\ell_1$, G\^ateaux differentiability and weak Hadamard (that is, $\mathcal{W}$-) differentiability coincide (see \cite{GS}). Now, $\ell_1$, being a separable space, has an equivalent norm that is G\^ateaux smooth, and hence, $\mathcal{W}$-smooth, and therefore, has the strong $\mathcal{W}$-MIP. But, being a separable non-Asplund space, it has no MIP renorming.
\end{itemize}

We also have similar interrelations between the uniform versions. In this case, most of the reverse implications are open.
\[
\begin{array}{ccccccc}
\mbox{UMIP} & \Rightarrow & \mbox{strong $\mathcal{W}$-UMIP} & \Rightarrow & \mbox{strong $\mathcal{K}$-UMIP} & \Leftrightarrow &\mbox{strong $\mathcal{F}$-UMIP} \\
\Updownarrow & & \Downarrow & & \Downarrow & & \Downarrow \\
\mbox{UMIP} & \Rightarrow & \mbox{$\mathcal{W}$-UMIP} & \Rightarrow & \mbox{$\mathcal{K}$-UMIP} & \Rightarrow &\mbox{$\mathcal{F}$-UMIP}
\end{array}
\]

\bibliographystyle{amsplain}

\end{document}